\newcommand{\R}{\mathcal{R}}
\theoremstyle{plain}
\newtheorem{theorem}{Theorem}
\newtheorem{lemma}{Lemma}
\newtheorem{proposition}{Proposition}
\newtheorem{corollary}{Corollary}
\theoremstyle{definition}
\newtheorem{definition}{Definition}
\newtheorem{question}{Question}
\newtheorem{conjecture}{Conjecture}
\theoremstyle{remark}
\theoremstyle{definition}
\title{On Path-Pairability in the Cartesian Product of Graphs}
\author{G\'abor M\'esz\'aros}
\begin{document}
\maketitle
\begin{abstract}
We study inheritance of path-pairability in the Cartesian product of graphs, and prove different (such as additive and multiplicative) inheritance patterns of path-pairability, depending on the size of the Cartesian product. We present path-pairable graph families, that improve the known upper bound on the minimal maximum degree of a path-pairable graph. Further results and open questions about path-pairability are also presented.
\end{abstract}

\section*{Introduction}
We discuss graph theoretic concepts, emerging from a practical networking problem introduced by Csaba, Faudree, Gy\'arf\'as, and Lehel in \cite{Cs}, \cite{mpp} and  \cite{pp}.
A graph $G$ on at least $2k$ vertices is called $k$-path-pairable if, for any pair of disjoint sets of (pairwise different) vertices $X=\{x_1,\dots,x_k\}$ and $Y=\{y_1,\dots,y_k\}$ of $G$, there exist $k$ edge-disjoint $x_iy_i$ paths joining the vertices. The path-parability number $pp(G)$ of a graph $G$ is the largest positive integer $k$, for which $G$ is $k$-path-pairable. A graph on exactly $2k$ vertices is simply called {\it path-pairable}, if it is $k$-path-pairable. The motivation of setting edge-disjoint paths between certain pairs of nodes naturally arose in the study of communication networks. There are various reasons to measure the capability of the network by its path-pairability number, that is, the maximum number of pairs of users, for which the network can provide separated communication channels without data collision. The inital problem and its graph theoretical model is discussed in \cite{Cs}.

Path-pairability is closely related to several other concepts, such as {\it linkedness} and {\it weak-linkedness}. A graph $G$ is $k$- linked/weakly $k$-linked if, for every ordered set of $2k$ vertices $X=\{x_1,\dots,x_k\}$ and $Y=\{y_1,\dots,y_k\}$, there exist vertex-disjoint/edge-disjoint paths $P_1,\dots, P_k$, such that each $P_i$ is an $s_it_i$-path. We wish to highlight, that, while the definition of weak linkedness may resemble our earlier definition of path-pairability, repetition of the vertices in the terminal list is allowed for weak-linkedness, and it is forbidden in case of path-pairability. Note that in case of linkedness, the two conventions lead to equivalent concepts. By definition, weakly $k$-linked graphs are $k$-path-pairable,
thus path-pairability is a special variant of weak-linkedness. Nevertheless, the two properties differ in several respects. Weakly $k$-linked graphs are necessarily $k$-edge-connected, while $k$-path pairable graphs only must satisfy a milder, so called {\it cut - condition}.

\begin{definition}[Cut-condition] A graph $G$ satisfies the {\it $k$-cut-condition} if, for every  $S\subset V(G)$ where $|S|\leq k$, $d(S)\geq |S|$ holds. A graph $G$ on $2n$ vertices satisfies the {\it cut-condition }, if for every  $S\subset V(G)$, $|S|\leq n$, $d(S)\geq |S|$ holds. 
\end{definition}

If $G$ is $k$-path-pairable, then it satisfies the $k$-cut condition. Indeed, if there exist $S\subset V(G)$ that violates the condition, terminals placed at every vertex of $S$, with their pairs in $G\backslash S$ 
 cannot be joined without subsequent use of at least one edge between the two sets.
  Note that the cut condition states a necessary but not sufficient condition for path-pairability. On the other hand, $k$-path-pairable graphs do not even have to be $2$-edge-connected. The star graph $K_{1,n}$ is one of the most illustrative counterexamples, being connected (but not 2-edge-connected) and $\lfloor\frac{n}{2}\rfloor$ path-pairable. Faudree, Gy\'arf\'as, and Lehel \cite{3reg} gave examples of $k$-path-pairable graphs with maximum degree $\Delta = 3$, for arbitrary values of $k$. In contrast, the same authors proved \cite{mpp}, that the maximum degree has to grow together with the graph size in path-pairable graphs. They in fact showed, that a path-pairable graph with maximum degree $\Delta$ has at most $2\Delta^\Delta$ vertices. The result places a lower bound of $O(\frac{\log n}{\log\log n})$ on the maximum degree of a path-pairable graph on $n$ vertices. This bound is conjectured to be asymptotically sharp, though examples of path-pairable graphs with maximum degree of the right order of magnitude have yet to be explored. The best known constructions are due to Kubicka, Kubicki and Lehel \cite{grid} as well as M\'esz\'aros \cite{me_diam} and have maximum degree of order of magnitude $O(\sqrt{n})$. The construction in \cite{grid} is obtained by taking the Cartesian product of two complete graphs. That motivated the author of this present paper to study  path-pairability in the Cartesian product in more details.
 
The {\it Cartesian product} of graphs $G$ and $H$ is the graph $G\square H$ with vertices $V(G\square H)=V(G)\times V(H)$, and $(x,u)(y,v)$ is an edge, if $x=y$ and $uv\in E(H)$ or $xy\in E(G)$ and $u=v$.
The Cartesian product of graphs has been extensively studied in the past decades. It gave rise to important classes of graphs; for example, the $n$-dimensional grid can be considered as the Cartesian product of lower dimensional grids. Hypercubes are well known members of this family with similar recursive structures:  the Cartesian product of $m$-dimensional and $n$-dimensional hypercubes is an $(m+n)$-dimensional one.  
The study of graph products leads to various deep structural problems such as invariance and inheritance of graph parameters.  We mention a couple of relevant results within the field of linkedness and its variants, with no claim of being exhaustive. 
Chiue and Shieh \cite{ChiueShieh} proved, that the Cartesian product of a $k$-connected and an $l$-connected graph is $(k+l)$-connected. Similar result for edge connectivity was proved by Xu and Yang \cite{XuYang}. Inheritance of linkedness has been investigated by M\'esz\'aros \cite{me_a+b}, who proved that the Cartesian product of an $a$-linked graph $G$ and a $b$-linked graph $H$ is $(a+b-1)$-linked, given that the graphs are sufficiently large in terms of $a$ and $b$. 

This paper has two main objectives. We prove an inheritance theorem of path-pairability (Theorem \ref{pp a+b}), that is similar to the inheritance of linkedness, presented in \cite{me_a+b}. We also prove an extension of Theorem \ref{pp a+b}, which states that, given sufficient space in the product graph, reasonably higher path-pairability can be achieved (Theorem \ref{pp oom}). We mention that neither linkedness, nor weak-linkedness share this property. 
 
\begin{theorem}\label{pp a+b}
If $G$ is an $a$-path-pairable graph with $|V(G)|\geq 8a$ and $H$ is a $b$-path-pairable graph with $|V(H)|\geq 8b$, then $ G\square H$ is $(a+b)$-path-pairable.
\end{theorem}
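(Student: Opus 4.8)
The plan is to exploit the product structure by decomposing every path into \emph{$G$-moves} (steps inside a copy $G\times\{h\}\cong G$, which I call the $G$-layer $L_h$) and \emph{$H$-moves} (steps inside a copy $\{g\}\times H\cong H$, the $H$-layer $M_g$). Writing the terminals as $x_i=(g_i,h_i)$ and $y_i=(g_i',h_i')$ for $1\le i\le a+b$, I would first split the index set into two classes $I_G$ and $I_H$ with $|I_G|=a$ and $|I_H|=b$. The pairs in $I_G$ will have their entire change of $G$-coordinate performed inside one distinguished highway layer $L_{h^\ast}$, whose copy of $G$ I route through by the $a$-path-pairability of $G$; symmetrically, the pairs in $I_H$ will have their change of $H$-coordinate performed inside one distinguished highway layer $M_{g^\ast}$, routed through by the $b$-path-pairability of $H$.

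Concretely, for $i\in I_G$ I would route $x_i\to y_i$ in three segments: an $H$-move $(g_i,h_i)\to(g_i,h^\ast)$ inside $M_{g_i}$, a $G$-move $(g_i,h^\ast)\to(g_i',h^\ast)$ inside the highway layer $L_{h^\ast}$, and an $H$-move $(g_i',h^\ast)\to(g_i',h_i')$ inside $M_{g_i'}$. The $a$ middle segments are realised simultaneously and edge-disjointly by feeding the projected terminal sets $\{(g_i,h^\ast):i\in I_G\}$ and $\{(g_i',h^\ast):i\in I_G\}$ into the path-pairing guaranteed by $G$. The pairs of $I_H$ are handled by the mirror-image scheme through $M_{g^\ast}$, using the path-pairing guaranteed by $H$.

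Edge-disjointness is then checked layer by layer, which is natural because each edge of $G\square H$ lies in a unique layer: a $G$-edge in exactly one $L_h$ and an $H$-edge in exactly one $M_g$. The two highway layers carry only their own middle segments ($a$ and $b$ paths, within budget), so there disjointness follows directly from path-pairability. The remaining traffic in a layer $M_g$ with $g\neq g^\ast$ consists solely of the connecting $H$-moves of those $I_G$-pairs with $g_i=g$ or $g_i'=g$, and symmetrically $L_h$ with $h\neq h^\ast$ carries only $I_H$-connecting $G$-moves; inside each such layer these are again realised by the path-pairability of $H$ (resp. $G$). To keep the highway layers disjoint from this connecting traffic I would choose $h^\ast$ outside the $H$-coordinates of the $I_H$-terminals and $g^\ast$ outside the $G$-coordinates of the $I_G$-terminals; since these are only $2b$ and $2a$ forbidden values while $|V(H)|\ge 8b$ and $|V(G)|\ge 8a$, such choices exist with room to spare.

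The main obstacle — and where the factor-four slack in $|V(G)|\ge 8a=4\cdot 2a$ and $|V(H)|\ge 8b$ is really spent — is making every invocation of path-pairability \emph{legitimate} and \emph{within budget} at the same time. Path-pairability requires that the landing vertices presented to a single copy be pairwise distinct and that no layer be asked to connect more than $a$ (resp.\ $b$) pairs; but several terminals may share a $G$- or an $H$-coordinate, which would collapse distinct paths onto a common landing vertex of a highway layer or overload a single connecting layer $M_g$ with more than $b$ pairs. I would resolve this in two ways: first, by choosing the partition $I_G,I_H$ in a load-aware manner, assigning terminals that cluster in a given $H$-coordinate to $I_G$ and those clustering in a given $G$-coordinate to $I_H$, so that the per-layer connecting loads never exceed $b$ and $a$; and second, where a residual clash remains, by inserting a short preliminary move that relocates a clustered terminal to a private landing row or column drawn from the vertices left unused by the size hypotheses. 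Carrying out this relocation while preserving global edge-disjointness is the technical heart of the argument; once it is in place, the three families of segments glue into $a+b$ edge-disjoint $x_iy_i$-paths and the theorem follows.
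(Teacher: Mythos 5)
Your highway scheme has a genuine gap, and it sits exactly where you place the ``technical heart'' that you do not carry out. For $i\in I_G$ every connecting $H$-move targets the vertex $(g_i,h^\ast)$, so whenever two $I_G$-terminals share a $G$-coordinate $g$ (or $g_i=g_j'$), their paths are forced to land on the \emph{same} vertex of the highway layer $L_{h^\ast}$, and the instance you feed to the $a$-path-pairability of $G$ is not a legal one (terminals must be pairwise distinct). Worse, the load on a connecting layer $M_g$ is not controllable by your partition: if $2b+1$ terminals from distinct pairs lie in a single $H$-layer $M_g$ (possible, since $|V(H)|\geq 8b$), at most $b$ of those pairs fit into $I_H$, so at least $b+1$ pairs of $I_G$ each demand an $H$-path inside $M_g\cong H$, all terminating at $(g,h^\ast)$ --- more than $b$ paths into one vertex of a $b$-path-pairable graph. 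No load-aware choice of $I_G,I_H$ with the fixed sizes $a$ and $b$ avoids this, and the ``short preliminary relocation to a private landing row'' that you invoke to repair it is precisely the nontrivial content of the theorem, not a routine patch: it must be done for up to $2(a+b)$ clustered terminals simultaneously, edge-disjointly from all three families of segments, which is where all the difficulty lives.

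The paper avoids the single shared highway altogether. For each pair it chooses a \emph{pseudopair} $u'$ in the same $G$-layer as $u$ and $v'$ in the same $G$-layer as $v$, placed so that $u'$ and $v'$ lie in a common, essentially terminal-free $H$-layer; the slack $|V(G)|\geq 8a$, $|V(H)|\geq 8b$ is spent on guaranteeing enough fresh landing layers so that the pseudopairs are automatically distinct and no layer exceeds its budget. The $G$-moves are then distributed over the original $G$-layers (at most $a$ pairs each, after a separate redistribution step handling the at most three overloaded layers), rather than concentrated in one copy of $G$. If you want to salvage your architecture, you would need to replace the single landing vertex $(g,h^\ast)$ by distinct landing vertices in distinct fresh layers for clustered terminals --- at which point you have rediscovered the pseudopair construction, and the highway disappears.
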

\begin{theorem}\label{pp oom}
If $G$ is an $a$-path-pairable graph and $H$ is a $b$-path-pairable graph and $v(G), v(H)\geq 4s$, $s < \frac{(a+1)(b+1)}{2}$, then $ G\square H$ is $s$-path-pairable.
\end{theorem}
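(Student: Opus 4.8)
The plan is to regard $G\square H$ as a grid of copies: for each $q\in V(H)$ the set $\{(p,q):p\in V(G)\}$ spans a copy of $G$, which I call the \emph{row} $R_q$, and for each $p\in V(G)$ the set $\{(p,q):q\in V(H)\}$ spans a copy of $H$, the \emph{column} $C_p$. A horizontal edge (one changing the $G$-coordinate) lies in a single row, a vertical edge in a single column, and a horizontal and a vertical edge are never equal, so two path segments can collide only if they share a row or share a column. Given terminal pairs $s_i=(p_i,q_i)$, $t_i=(p_i',q_i')$ for $1\le i\le s$, I would route every pair by a monotone staircase assembled from horizontal segments (each routed inside one row, invoking the $a$-path-pairability of $G$) and vertical segments (each inside one column, invoking the $b$-path-pairability of $H$). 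Inside one row at most $a$ horizontal segments can be realised edge-disjointly, and inside one column at most $b$ vertical segments; so the whole task reduces to distributing the demand so that these local budgets are respected.

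The key reduction is to a labelling problem. Each of the $2s$ terminals emits exactly one segment incident to it, which is either horizontal (charging the row containing the terminal) or vertical (charging the terminal's column). First I would show it suffices to assign each terminal a label $H$ or $V$ so that every row receives at most $a$ labels $H$ and every column at most $b$ labels $V$. Indeed, a pair whose endpoints get opposite labels is joined by an $L$-shaped route (one horizontal and one vertical segment); a pair whose endpoints are both labelled $H$ is routed by a horizontal, then a vertical, then a horizontal segment, the single vertical transit being pushed into a column containing no terminal, and symmetrically for two labels $V$. Because $v(G),v(H)\ge 4s$ while at most $2s$ rows and $2s$ columns contain terminals, there is an abundance of terminal-free lines to absorb the at most $s$ transit segments, and enough free coordinates to choose all meeting points so that, inside each row and each column, the (at most $a$, resp. $b$) segment endpoints are pairwise distinct. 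The path-pairability of $G$ and $H$ then realises the segments of each copy edge-disjointly, and segments in distinct rows or distinct columns cannot collide, so their union is a family of $s$ edge-disjoint paths.

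The hard part will be proving that the required labelling always exists. I would phrase it as an orientation problem: form the bipartite multigraph whose vertices are the occupied rows and columns and whose edges are the $2s$ terminals, each joining its row to its column; a valid labelling is precisely an orientation in which every row-vertex has in-degree at most $a$ and every column-vertex in-degree at most $b$. By the Hall-type (Hakimi) feasibility criterion such an orientation exists exactly when, for every set $\mathcal{R}$ of rows and set $\mathcal{C}$ of columns, the number $t(\mathcal{R},\mathcal{C})$ of terminals lying in $\mathcal{R}\times\mathcal{C}$ satisfies $t(\mathcal{R},\mathcal{C})\le a|\mathcal{R}|+b|\mathcal{C}|$. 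To verify this I would split into two cases. If $|\mathcal{R}|\,|\mathcal{C}|\le a|\mathcal{R}|+b|\mathcal{C}|$ the bound is immediate from $t(\mathcal{R},\mathcal{C})\le |\mathcal{R}|\,|\mathcal{C}|$. Otherwise $(|\mathcal{R}|-b)(|\mathcal{C}|-a)>ab$, which forces $|\mathcal{R}|\ge b+1$ and $|\mathcal{C}|\ge a+1$, whence $a|\mathcal{R}|+b|\mathcal{C}|\ge a(b+1)+b(a+1)=2ab+a+b\ge (a+1)(b+1)>2s\ge t(\mathcal{R},\mathcal{C})$. This is exactly where the hypothesis $s<\frac{(a+1)(b+1)}{2}$ is consumed, and the strictness of the inequality is what leaves the necessary slack.

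Finally I would dispose of the routine bookkeeping: a pair whose endpoints already share a row (or a column) is joined by a single segment and counted once toward that line's budget, which only lowers the load; and the freedom granted by $v(G),v(H)\ge 4s$ guarantees that transit lines and meeting points can be chosen disjointly from the occupied coordinates, keeping the endpoint set inside every copy distinct so that the path-pairability of $G$ and of $H$ applies verbatim.
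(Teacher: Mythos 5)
Your reduction to an orientation problem is a genuinely different route from the paper's and, where it works, a cleaner one: the paper observes that at most $b$ of the $G$-layers can contain more than $a$ types of terminals (otherwise there would be at least $(a+1)(b+1)>2s$ terminals) and then redistributes terminals out of those overloaded layers by a case analysis, while you obtain the same numerical content in one step from the feasibility condition $t(\mathcal{R},\mathcal{C})\le a|\mathcal{R}|+b|\mathcal{C}|$. Your two-case verification of that condition is correct, granting $a,b\ge 1$ (which the theorem implicitly assumes) and noting that in your second case both factors must indeed be positive, since a product of two nonpositive factors is at most $ab$.

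There is, however, a genuine gap in the routing step, namely in the $L$-shaped routes for pairs with opposite labels. For such a pair the meeting point is not free: if $u=(p,q)$ is labelled $H$ and its partner $v=(p',q')$ is labelled $V$, the horizontal and the vertical segment can only meet at $(p',q)$, the intersection of $u$'s row with $v$'s column. Two $H$-labelled terminals of the same row whose $V$-labelled partners share a column are therefore forced onto the same meeting point, and a forced meeting point may also coincide with a terminal sitting at $(p',q)$. In either case the segment endpoints inside that row (or column) are no longer pairwise distinct, so the $a$- (resp.\ $b$-) path-pairability of the layer cannot be invoked; the claim that there are ``enough free coordinates to choose all meeting points'' is false for $L$-routes. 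The gap is repairable inside your framework: give every pair its own terminal-free transit column $c^*$ and transit row $r^*$ (at most $s$ of each are needed, and $v(G),v(H)\ge 4s$ leaves at least $2s$ free lines of each kind), and route a mixed pair as $u\to(c^*,q)\to(c^*,r^*)\to(p',r^*)\to v$. Then every occupied row (column) carries only the at most $a$ (at most $b$) segments emitted by its own $H$- ($V$-) labelled terminals, with far endpoints lying in pairwise distinct terminal-free lines, and each transit line carries a single segment, so path-pairability applies everywhere. This is essentially the paper's own device of pseudopairs placed in terminal-free layers; after the repair the two proofs differ only in how the labelling, respectively the redistribution, is found.
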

\begin{corollary}\label{pp ab}
If $G$ is an $a$-path-pairable graph and $H$ is a $b$-path-pairable graph and $v(G), v(H)\geq 4\cdot \frac{(a+1)\cdot (b+1)}{2}-1$, then $ G\square H$ is $\big(\frac{(a+1)\cdot (b+1)}{2}-1\big)$-path-pairable.
\end{corollary}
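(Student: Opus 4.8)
The plan is to obtain Corollary \ref{pp ab} as an immediate specialization of Theorem \ref{pp oom}, instantiating the free parameter $s$ at the largest value the theorem permits. Concretely, I would set
\[
s = \frac{(a+1)(b+1)}{2} - 1,
\]
which is precisely the extremal parameter one reads off from the strict inequality in Theorem \ref{pp oom}, and then check that both hypotheses of that theorem hold for this choice of $s$.

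The first hypothesis is the strict bound $s < \frac{(a+1)(b+1)}{2}$, and this is automatic, since $s = \frac{(a+1)(b+1)}{2} - 1 < \frac{(a+1)(b+1)}{2}$. The second is the size requirement $v(G), v(H) \geq 4s$. Here I would simply compare the two arithmetic expressions: we have $4s = 2(a+1)(b+1) - 4$, whereas the corollary assumes $v(G), v(H) \geq 4\cdot\frac{(a+1)(b+1)}{2} - 1 = 2(a+1)(b+1) - 1$. Since $2(a+1)(b+1) - 1 \geq 2(a+1)(b+1) - 4$, the size condition $v(G), v(H) \geq 4s$ is met (indeed with three vertices to spare). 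Theorem \ref{pp oom} then yields that $G\square H$ is $s$-path-pairable, which is exactly the asserted conclusion.

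I do not anticipate any genuine obstacle, as the corollary is a direct reading of Theorem \ref{pp oom} at its boundary parameter; all of the substance sits in the theorem, not its corollary. The only point deserving a brief comment is integrality: when $(a+1)(b+1)$ is even, $\frac{(a+1)(b+1)}{2}$ is an integer and $s = \frac{(a+1)(b+1)}{2} - 1$ is the largest integer strictly below it, so the statement is tight; when $(a+1)(b+1)$ is odd one reads the claim with $\lfloor s \rfloor$ in place of $s$, and the identical verification of the two hypotheses goes through unchanged.
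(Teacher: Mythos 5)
Your proposal is correct and is exactly the intended argument: the paper itself only remarks that Corollary \ref{pp ab} ``follows trivially from Theorem \ref{pp oom}'' by taking $s=\frac{(a+1)(b+1)}{2}-1$, and your verification of the two hypotheses (the strict inequality and the size bound $v(G),v(H)\geq 4s$) is the same routine check, carried out under either reading of the parenthesization in the size hypothesis.
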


Theorem \ref{pp a+b} and \ref{pp oom} concern themselves with path-pairability of the product graph $G\square H$, where path-pairabilites of the factors $G$ and $H$ are conveniently small, compared to the graph sizes $|V(G)|$ and $|V(H)|$. Our other objective is the examination toward the other extremity, when $pp(G)$ and $pp(H)$ are as large as possible, that is, both $G$ and $H$ are path-pairable. The ultimate goal would be to find sufficient conditions that guarantee path-pairability of the product graph, thus offer a valuable tool to generate new path-pairable graph families. To date, very little is known about this kind of inheritance.
The Cartesian product of two path-pairable graphs is not necessarily path-pairable. A counter-example is presented in Proposition \ref{stars}. On the other hand, it is still an open and quite annoying question, if path-pairability of at least one of the multiplicands is necessary at all for path-pairability of the product graph. We believe, that the described condition is not necessary, but cannot verify it by means of a counterexample, hence we state it as a conjecture.
\begin{conjecture}
There exist non-path-pairable graphs $G$ and $H$, such that $G\square H$ is path-pairable.
\end{conjecture}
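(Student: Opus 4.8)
The plan is to first pin down what kind of factors can possibly work. I would begin by showing that both $G$ and $H$ must satisfy the cut-condition, so that the search is confined to graphs for which the cut-condition holds but path-pairability nonetheless fails. Indeed, if some $S\subseteq V(G)$ with $|S|\le |V(G)|/2$ violates $d(S)\ge |S|$, then the cylinder $S\times V(H)$ has edge-boundary $d_{G\square H}(S\times V(H))=|V(H)|\cdot d_G(S)<|V(H)|\cdot|S|=|S\times V(H)|$ in $G\square H$ (only the $G$-edges crossing $S$ contribute, each lifted to $|V(H)|$ copies), and since $|S\times V(H)|\le |V(G\square H)|/2$, the product already fails its cut-condition and cannot be path-pairable. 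Hence any witness must be assembled from factors that satisfy the cut-condition yet are not path-pairable --- precisely the graphs demonstrating that the cut-condition is necessary but not sufficient.

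Next I would fix a concrete candidate. The four-cycle $C_4$ satisfies the cut-condition (every $S$ with $|S|\le 2$ has $d(S)\ge |S|$, with equality exactly for adjacent pairs), but it is not path-pairable: the antipodal request pairing opposite vertices of the cycle forces the two paths to share an edge, so $pp(C_4)=1<2$. This makes $C_4$ the smallest building block, and I would test whether $C_4\square C_4$ (a $4\times 4$ torus on $16$ vertices) is $8$-path-pairable, or more flexibly whether $C_4\square H$ is path-pairable for a conveniently chosen $H$ --- for instance an $H$ of odd order, which is automatically non-path-pairable while still able to satisfy the cut-condition, leaving $C_4$ as the sole ``interesting'' even factor. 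The preliminary step for any such candidate is routine: verify the product cut-condition, which for tori follows from vertex-transitivity and their known edge-isoperimetric behaviour.

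The heart of the proof would be an explicit edge-disjoint routing scheme handling an arbitrary pairing of all $|V(G\square H)|$ vertices. The idea is to split each requested path into a segment inside a $G$-fibre and a segment inside an $H$-fibre, and to use the freedom in the second coordinate to reroute exactly those requests that a single $C_4$-fibre cannot serve, in the hope that the orthogonal fibres supply edge-disjoint detours repairing the deficiency of the factor. I would organise the $|V(G\square H)|/2$ pairs according to how they distribute over the fibres, discharge the easy pairs by the inheritance mechanism underlying Theorem~\ref{pp a+b}, and treat the residual hard pairs by hand.

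The main obstacle is that no existing inheritance result reaches full path-pairability. For the tight small candidates --- those built from factors such as $C_4$ whose path-pairability already sits just below $|V|/2$ --- the size hypotheses of Theorems~\ref{pp a+b} and~\ref{pp oom} and of Corollary~\ref{pp ab} are not even met (for $C_4\square C_4$ the corollary would require $v(G)\ge 7$), so those results give no information whatsoever; and in the regime where they do apply they yield only $pp(G\square H)$ of order $(a+1)(b+1)$, dwarfed by the $v(G)v(H)/2$ demanded for full path-pairability. A genuinely new routing argument is therefore unavoidable, and the difficulty is sharpened by the tight cuts of the product: a half-torus $S$ with $d(S)=|S|$ leaves no slack, so any request placing all of $S$ opposite its complement must be routed with every boundary edge used exactly once. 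Guaranteeing such an exactly-saturating, integral edge-disjoint routing simultaneously for every admissible pairing is the real crux, and it remains conceivable that a parity or refined-cut obstruction defeats each small candidate --- which would explain why the conjecture has so far resisted both proof and disproof.
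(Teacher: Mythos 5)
This statement is stated in the paper as an open conjecture --- the author explicitly says he ``cannot verify it by means of a counterexample'' --- so there is no proof in the paper to compare against, and your proposal does not supply one either: it is a research plan whose final paragraph concedes that the crucial routing argument is missing and that the problem ``remains open.'' The two things in it that are actually proved are correct and worth keeping: the observation that a cut-violating set $S\subseteq V(G)$ lifts to the cut-violating cylinder $S\times V(H)$ (so both factors of any witness must satisfy the cut-condition), and the verification that $C_4$ satisfies the cut-condition yet is not $2$-path-pairable. But these only narrow the search; they do not exhibit the required pair $(G,H)$.

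The concrete flaw is in your chosen candidate. Since $C_4\cong K_2\square K_2=Q_2$ and the Cartesian product of hypercubes is a hypercube, $C_4\square C_4\cong Q_4$, and the paper itself cites Faudree's result that $Q_d$ is \emph{not} path-pairable for even $d$; so your primary test case is already known to fail, and no amount of routing ingenuity will rescue it. The fallback suggestion of taking $H$ of odd order exploits only the letter of the definition (a graph of odd order is ``non-path-pairable'' vacuously) and would not be accepted as settling the conjecture in its intended sense, which asks whether genuine path-pairability of at least one factor is necessary. If you want to pursue this, you would need a different even-order candidate --- for example a factor satisfying the cut-condition with strict slack on every set, so that the exactly-saturating-cut obstruction you correctly identify does not arise --- together with a complete edge-disjoint routing for every pairing, which is precisely the step neither you nor the paper provides.
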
 

Kubicka, Kubicki, and Lehel \cite{grid} investigated path-pairability of complete grid graphs, that is, the Cartesian product of complete graphs, and proved that the two-dimensional complete grid $K_a\times K_{b}$ of size $n=ab$ is path-pairable. Our objective is to improve the presented result and show, that the Cartesian product of the complete bipartite graph $K_{m,m}$ with itself is path-pairable for sufficiently large even values of $m$. The examined path-pairable product has $n=4m^2$ vertices and maximum degree $\Delta = 2m=\sqrt{n}$, which improves the previously discussed upper bound ($\approx \sqrt{2}\sqrt{n}$) on $\Delta(G)$ to $\sqrt{n}$. It also presents a new infinite family of path-pairable graphs, as well as gives examples of non-complete path-pairable graphs, whose Cartesian product is path-pairable as well.
\begin{theorem}\label{m}
The product graph $K_{m,m}\square K_{m,m}$ is path-pairable for even values of $m$, if $m\geq 104$.
\end{theorem}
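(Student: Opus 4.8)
The plan is to view $K_{m,m}\square K_{m,m}$ as a $2m\times 2m$ array of vertices $(x,y)$, where the first coordinate $x$ ranges over the $2m$ vertices of the first factor and $y$ over those of the second. Fixing $y$ gives a \emph{row} and fixing $x$ gives a \emph{column}, each of which induces a copy of $K_{m,m}$; I will use that $K_{m,m}$ is $m$-edge-connected and $m$-path-pairable. The single structural observation that makes the whole approach viable is that the edge set splits cleanly: distinct rows are pairwise edge-disjoint, distinct columns are pairwise edge-disjoint, and every row-edge is distinct from every column-edge. Consequently, if one routing uses only column-edges and a second only row-edges, the two are automatically edge-disjoint, and conflicts can arise only \emph{within a single row or a single column}. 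Since the product has $4m^2$ vertices, path-pairability means $2m^2$-path-pairability, so every vertex is a terminal and I must realize an arbitrary perfect pairing of the $4m^2$ vertices by $2m^2$ edge-disjoint paths.

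First I would set up a two-phase ``vertical then horizontal'' routing. The idea is to assign to each pair a common \emph{middle row} $r$, route both of its terminals vertically (within their own columns) to row $r$, and then connect them horizontally inside the copy of $K_{m,m}$ that is row $r$. This reduces the theorem to finding a labeling $\phi$ of the terminals by middle rows in $\{1,\dots,2m\}$ satisfying: (C1) paired terminals receive the same label; (C2) within every column the labels assigned to the inter-column terminals are distinct, and each label is used at most $m$ times overall. Given such a $\phi$, the first phase is, in each column, a partial-permutation routing in $K_{m,m}$ in which every vertex is an endpoint of at most two demands, and the second phase is, in each row, an $m$-path-pairability instance with distinct endpoints (one terminal per column by injectivity in (C2)); by the structural observation these two phases never collide, and distinct rows and columns never collide. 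Thus the entire burden is pushed onto the existence of $\phi$.

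Next I would dispose of the pairs whose two terminals lie in the same column: these cannot be given a common middle row without colliding at a single vertex, so instead I route them entirely inside their own column. This only raises the per-column demand-degree to at most two, which the $m$-edge-connectivity of $K_{m,m}$ still accommodates with short paths once $m$ is large. Removing these intra-column pairs, the remaining ``inter-column'' pairs are encoded by an auxiliary loopless multigraph $D$ on the $2m$ columns, one edge per pair joining the two columns containing its terminals; every vertex of $D$ has even degree at most $2m$. In this language a valid labeling $\phi$ is exactly a proper edge-colouring of $D$ by the $2m$ middle rows in which each colour class is a (near-)perfect matching, i.e.\ each row is used at most $m$ times.

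The hard part will be producing this balanced edge-colouring. When $D$ is $2m$-regular I would apply Petersen's $2$-factorization to split $D$ into $m$ edge-disjoint $2$-factors, dedicate a private pair of rows $\{2j-1,2j\}$ to the $j$-th $2$-factor, and properly $2$-colour that $2$-factor with its two rows; at each column the two edges of each $2$-factor then receive its two private colours, so all $2m$ incident edges get the $2m$ distinct rows (C2), and each colour forms a perfect matching and hence is used exactly $m$ times. This succeeds precisely when every cycle of every $2$-factor is even, so the genuine obstacles are odd cycles and the irregularity of $D$ coming from intra-column pairs; I expect to resolve these by local edge-exchanges between $2$-factors and by a parity/counting argument, which is where the evenness of $m$ and the slack guaranteed by $m\ge 104$ are consumed. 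Finally I would verify that all the per-column partial permutations and per-row pairings can be realized by short edge-disjoint paths, the inequalities in this verification again forcing $m$ to be at least the stated constant.
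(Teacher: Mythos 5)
Your decomposition is genuinely different from the paper's: the paper splits $V(K_{m,m}\square K_{m,m})$ into the four independent classes $A_{ij}=A_i\square A_j$, arranges them cyclically, and runs three phases (swarming, line-up, final match) whose balancing steps are all done with Hall's theorem plus explicit bookkeeping of how many terminals each vertex ``hosts'' (up to 5) and how many edges it has spent (up to 13). You instead work with the $2m$ rows and $2m$ columns and try to reduce everything to one clean combinatorial object, a balanced proper edge-colouring of the demand multigraph $D$.

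That reduction is where the proposal breaks. Your condition (C2) asks for a proper edge-colouring of $D$ with only $2m$ colours, where $D$ is a loopless multigraph on the $2m$ columns with maximum degree $2m$. For multigraphs the chromatic index can be as large as $\lfloor 3\Delta/2\rfloor$ (Shannon's bound), and this is realized by pairings of $K_{m,m}\square K_{m,m}$: take three columns $c_1,c_2,c_3$, match $m$ vertices of $c_1$ to $m$ vertices of $c_2$, the other $m$ vertices of $c_1$ to $m$ vertices of $c_3$, and the remaining $m$ vertices of $c_2$ to the remaining $m$ vertices of $c_3$ (pairing all other columns internally). Then $D$ contains a triangle on $c_1,c_2,c_3$ with each edge of multiplicity $m$; its $3m$ edges pairwise share a vertex, so any proper edge-colouring needs $3m>2m$ colours, and no labelling $\phi$ satisfying (C2) exists. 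No local edge-exchange between $2$-factors or parity argument can repair this, since it is an absolute lower bound on the number of colours; so the steps you deferred (``odd cycles and irregularity \dots resolved by local exchanges'') are not a technicality but the point at which the scheme fails. To salvage the row/column approach you would have to relax (C2) and allow several terminals of a column to be routed to the same middle-row vertex (edge-disjoint paths may share vertices), and then control how many demands each such vertex accumulates and how many of its $m$ edges survive for the horizontal phase --- which is, in essence, the hosting-and-degree bookkeeping the paper carries out in its three-phase argument. As written, the proof does not go through.
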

We follow the notation of \cite{product}. For the sake of completeness, we recall definitions of the mainly used concepts. A $G$-layer $G_x$ ($x\in V(H)$) of the Cartesian product $G\square H$ is the subgraph induced by the set of vertices $\{(u,x): u\in V(G)\}$.  An $H$-layer is defined analogously. We call edges of $G\square H$ lying in $G$-layers vertical while edges lying in $H$-layers are called horizontal. Unless it is misleading, we also use the notation $G_z=G_x$ and $H_z=H_y$ for layers corresponding to $z=(x,y)\in G\square H$. 

 We also refer the reader to \cite{product} for further details on product graphs. For a comprehensive survey of results concerning path-pairability, we refer to \cite{F3} and \cite{pp}.

\section*{Proof of Theorem \ref{pp a+b} and Theorem \ref{pp oom}}



Let $M$ denote the set of $2(a+b)$ (arbitrarily chosen and paired) terminals in $G\square H$. We may assume that $a\geq b$. We first prove the theorem in the "base" case, when no $G$-layer contains terminals belonging to $(a+1)$ or more pairs. The assumption in fact implies that no layer contains more than $2a$ terminals.
Our goal is to join terminals lying on the same $G$-layer, while choosing a "pseudopair" $u'$ of every remaining terminal $u$ such that $u'\in G_u$. Similarly, if $v$ denotes the real terminal pair of $u$ (such that $v\not\in G_u$), we choose $v'$, such that $u'$ and $v'$ are on the same $H$-layer. We join $uu'$, $u'v'$, and $v'v$ pairs for all $(u,v)$ terminal pairs by edge-disjoint paths. The union of such path-triples will join the initially set terminal pairs. We describe the above steps in more details as follows.

  Take a $G$-layer $G_x$ $(x\in H)$ with terminals $u_1,\dots,u_t$ ($1\leq t \leq 2a$). Observe that, if $t=a+s$ where $1\leq s\leq a$, then $G_x$ contains at least $s$ pairs of terminals. For an unmatched terminal $u$ of $G_x$, we choose a pseudo pair $u'\in G_x$, such that different terminals get different pseudo pairs and $H_{u'}$ contains no other terminal, but it contains the pseudo-pair of $v$, the terminal pair of $u$. Since $|V(G)|\geq 8a$, we can freely assign terminal-free vertical layers for the pseudo pairs of each pair of the terminals. Moreover, this assignment can be carried out even with the additional constraint, that no vertical layer will contain more than $b$ pairs of pseudo pairs. Now every $G$-layer contains at most $a$ pairs of terminals or terminal-pseudo-pair pairs. Using the fact the $G$-layers are $a$-path-pairable, we can assign edge-disjoint paths joining the pairs within every one of the layers. Having done that, the appropriate $(u',v')$ pseudopairs can paired within their $H$-layers by an arbitrary path. That completes the proof of the base case.

We mention that our presented technique wastes a lots of potential in the pairing of the pseudopairs. Using that $H$, and so every $H$-layer is $b$-path-pairable, $\frac{2a+2b}{2b}\leq a$ additional empty $H$-layers are sufficient to finish the pairing, hence fewer $H$-layers suffice to contain the pesudopairs. The lower bounds on the graph sizes in the theorem can be improved to $|V(G)|\geq 5a$ and $|V(H)|\geq 5b$ in the discussed case. We continue our proof with the initial weaker bounds. 

Now we turn to examination of the general case. As $4(a+1)>2(a+b)$, at most 3 $G$-layers contain $(a+1)$ or more types of terminals. Our goal is to reduce our problem to the base case, by redistributing the terminals among the $G$-layers. It will be done by assigning pseudopairs for each terminal within its original $H$-layer. Observe that, as the solution of the base case contains a horizontal shift, the combination of the initial redistribution, and the solution of the base case will use no vertical or horizontal edge more than once. For the redistribution of the terminals, we  follow a case-by-case analysis. 

\begin{enumerate}
\item Assume first, that $G_x$ is the only $G$-layer that contains $u_1,\dots, u_{a+t}$ terminals belonging to different pairs, where $1\leq t\leq b$. It means there are at most $(a+2b-t)$ terminals outside of $G_x$. We claim that one of these layers contains at most $(a-t)$ terminals, else the graph $G\square (H-x)$ would contain at least $(8b-1)(a-t+1) > (a+2b-t)$ terminals, clearly contradicting our previous observation. Take a $G$-layer $G_y$ with the above property. We want to choose $t$ of the terminals in $G_x$ (if their pair is in $G_x$ as well, then we choose both of them) and assign them pseudopairs in $G_y$, together with vertical paths joining the terminals to their pseudopairs. Note that we cannot assign a pseudopair to a vertex that already contains a terminal.
The terminals initially placed in $G_y$ prohibit the assigment of pseudo pairs for at most $a$ of the terminals (singleton or paired) of $G_x$, that is, at least $(a+t)-(a-t)=2t$ terminals can get pseudopairs, while we only needed $t$. Note also that the total number of types of terminals and pseudopairs in $G_y$ is at most $(a-t)+t =a$ after the redistributing step, as prescribed in the base case. We can now apply the solution of the base case on a new set of terminals, where pseudopairs take the place of their initial terminals. 

\item If two $G$-layers contain at least $(a+1)$ types of terminals, the remaining terminals occupy at most $(2b-2)$ $G$-layers, that is, there exists at least $6b$ $G$-layers that are free of terminals. 
If $b=1$, both layers contain exatly $(a+1)$ terminals of different pairs. One can arbitrarily pick a pair, shift them vertically just as in the previous case, completing our task. If $b\geq 2$, every $H$ is at least 2-path-pairable, hence we can arbitrarily define for a terminal $u$ a pseudopair $u'$ in $H_u$, such that

\begin{enumerate}
\item $G_{u'}$ contains no terminal and contains at most $a$ pseudopairs at the end of the procedure,
\item $uu'$ pairs are joined within $H_u=H_{u'}$ by edge-disjoint horizontal paths.
\end{enumerate} 
Indeed, to satisfy the first condition, observe that we have at most $(2a+2b)$ terminals that we distribute among $6b$ empty layers without any particular constraint (remember, here a terminal and its pair do not have to get pseudopairs assigned to the same $G$-layer), thus a balanced distribution with at most $\lceil \frac{2a}{3b} \rceil\leq a$ terminals can be chosen. The second condition can be guaranteeed by 2-path-pairability, as we assign at most 2 pseudopairs within an $H$-layer.

\item The case with three overloaded layers ($G_1$, $G_2$, $G_3$) works similarly to the previous one. Observe first that in the examined case $3(a+1)\leq (2a+2b)$, hence $b\geq \frac{a+3}{2}\geq 2$. Remember, that $a\geq b$, thus $a\geq \frac{a+3}{2}\Rightarrow a\geq 3$, which yields $b\geq 3$ as well. By pigeon-hole principal, we have at least $(a+6b)$ empty $G$-layers at disposal, each of them expected to receive $\lceil\frac{2a+2b}{a+6b}\rceil\leq a$ pseudopairs on average. Since $b\geq 3$, the at most three paths can be established within every $H$-layer, that completes the examination of the case and so the proof as well. 
\end{enumerate}

Before proving Theorem \ref{pp oom}, we state that the bound of Theorem \ref{pp a+b} gives the right order of magnitude of path-pairability for certain classes of graphs.

\begin{proposition}\label{stars}
The Cartesian product $K_{1,b}\square K_{1,d}$ is at most $ \lceil\frac{b+d}{2}\rceil$-path-pairable.
\end{proposition}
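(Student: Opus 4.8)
The plan is to expose the global center of the product as the single bottleneck that caps path-pairability. Writing $c_1$ and $c_2$ for the centers and $a_1,\dots,a_b$, $e_1,\dots,e_d$ for the leaves of the two stars, the vertex $z=(c_1,c_2)$ has degree $b+d$, each vertex $(a_j,c_2)$ or $(c_1,e_i)$ is a neighbour of $z$, and each "corner" $(a_j,e_i)$ has degree $2$, its only neighbours being $(a_j,c_2)$ and $(c_1,e_i)$. First I would record the elementary counting fact on which everything rests: any path that uses $z$ as an interior vertex occupies two of the $b+d$ edges at $z$, so at most $\lfloor\frac{b+d}{2}\rfloor$ terminal paths can pass through $z$ simultaneously. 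Consequently, to prove the bound it suffices to produce a pairing of $\lceil\frac{b+d}{2}\rceil+1$ terminal pairs for which every routing is forced to send each of the $\lceil\frac{b+d}{2}\rceil+1$ paths through $z$: the resulting demand of $2\big(\lceil\frac{b+d}{2}\rceil+1\big)>b+d$ edge-uses at $z$ is then impossible.

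Next I would build such a pairing by a "crossing" placement that generalises the smallest case $b=d=1$, where $K_{1,1}\square K_{1,1}=C_4$ and the two diagonal pairs already cannot be routed edge-disjointly. The terminals are placed on the $b+d$ neighbours of $z$ and on selected corner vertices, and are paired so that (i) the two members of each pair lie on opposite sides of $z$, in the sense that joining them through the product requires passing $z$, and (ii) the few short bypasses that avoid $z$ --- each of which runs through a degree-$2$ corner $(a_j,e_i)$ via the two edges $(a_j,c_2)(a_j,e_i)$ and $(c_1,e_i)(a_j,e_i)$ --- are themselves neutralised by planting terminals on those very corners. Since a degree-$2$ corner that carries a terminal cannot simultaneously be an interior vertex of another pair's path, occupying the relevant corners removes the detours and leaves $z$ as the only available switch.

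The step I expect to be the main obstacle is precisely (ii): establishing that no routing can sneak a pair around $z$. The cut-condition is satisfied here with equality on the natural cuts (for instance, each row $\{(a_j,c_2)\}\cup\{(a_j,e_i):i\}$ has edge-boundary exactly equal to its cardinality), so no Menger/cut-counting certificate is available and the infeasibility is of the subtler, genuinely multicommodity type. I would therefore argue directly: assuming a valid routing exists, I would track, corner by corner and neighbour by neighbour, which edges each detouring path is forced to commit, and show by an exchange/parity argument that any path avoiding $z$ must reuse an edge already spent by a blocking pair or by another detour, so that all $\lceil\frac{b+d}{2}\rceil+1$ paths must ultimately traverse $z$, contradicting the edge count above. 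Handling the asymmetric case $b\neq d$ and the ceiling cleanly --- namely the single unmatched neighbour of $z$ and the one extra pair beyond $\lfloor\frac{b+d}{2}\rfloor$ --- is the bookkeeping part of this last step.
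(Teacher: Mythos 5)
Your proposal correctly isolates the one local fact that makes the construction work --- a degree-two corner that carries a terminal cannot serve as an interior vertex of any other pair's path, since that would demand three edge-uses at a vertex of degree two --- but the global strategy built on top of it has a genuine gap. You propose to place terminals on all $b+d$ neighbours of $z=(c_1,c_2)$, pair them ``across'' $z$, and then argue that every one of the $\lceil\frac{b+d}{2}\rceil+1$ paths is forced through $z$, so that the edge-count $2\bigl(\lceil\frac{b+d}{2}\rceil+1\bigr)>b+d$ at $z$ gives the contradiction. The step you yourself flag as the main obstacle is exactly where this fails: after spending $b+d$ terminals on the neighbours of $z$, you have only two or three terminals left to plant on corners, while there are $bd$ corners in total. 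A pair $\bigl((a_j,c_2),(c_1,e_i)\bigr)$ can bypass $z$ not only through its own corner $(a_j,e_i)$ but through arbitrarily long detours of the form $(a_j,c_2)\rightarrow(a_j,e_{i'})\rightarrow(c_1,e_{i'})\rightarrow(a_{j'},e_{i'})\rightarrow(a_{j'},c_2)\rightarrow\cdots$, which alternate between high-degree vertices (harmless as interior vertices) and corners; as long as the corners visited are terminal-free, the detour is available. With almost all corners unoccupied you cannot force even a single path through $z$, let alone all of them, and the promised ``exchange/parity argument'' is left entirely unproven --- it is not a bookkeeping issue but the whole content of the claim.

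The paper's proof avoids this by being far more surgical: it does not try to force every path through $z$, only to create an unavoidable conflict between \emph{two} specific pairs on \emph{two} specific edges. All the blocking terminals are concentrated on the degree-two corners of a single column $C$ and a single row $R$ (meeting at a corner $y$), so that \emph{every} neighbour of $z_{a+1}=(a_{j_0},c_2)$ other than $z$, and every neighbour of $z_{b+1}=(c_1,e_{i_0})$ other than $z$, is an occupied degree-two vertex. Pairing $z_{a+1}$ with $z_{b+1}$ then forces that path to use both edges $zz_{a+1}$ and $zz_{b+1}$, while pairing $y$ with an outside corner $x$ forces the $xy$-path to use at least one of those same two edges; three required traversals of two edges is the contradiction. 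The remaining pairs never need to go near $z$ at all --- they are paired among themselves inside $C\cup R$ and exist only to occupy the corners. If you want to salvage your approach, you should redirect it toward this kind of local two-pair conflict rather than a global congestion bound at $z$.
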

\begin{proof}
Let $C$ and $R$ denote the sets of vertices of degree two in an arbitrary column and an arbitrary row not contanining the unique vertex of degree $(a+b)$ (denoted by $z_{a+b}$) and let $x$ be an additional vertex of degree two. We denote the unique vertex of the intersection $C\cap R$ by $y$.  We place terminals in $C\cup\R\cup\{x\}$ such that $x$ and $y$ form a pair, as well as the unique vertices of degree $(a+1)$ and $(b+1)$ (denoted by $z_{a+1}$ and $z_{b+1}$) form another. Observe that paths that join the above two pairs both use either the edge between $z_{a+1}$ and $z_{a+b}$ or between $z_{b+1}$ and $z_{a+b}$, hence the pairing cannot be achieved.
\end{proof}

We believe that, with a somewhat longer and more cumbersome analysis, one can actually prove that $pp(K_{1,b}\square K_{1,d})=\lceil\frac{b+d}{2}\rceil$. Nevertheless, the actual proof is bound to require a lengthy case-by-case analysis, that we do not consider particularly interesting and do not investigate. 

Now we turn to the proof of Theorem \ref{pp oom}. We use the same techniques as in the previous proof. Again, we may assume $b\leq a$. If no $G$-layer contains more than $a$ different types of terminals, we can join the pairs that share a $G$-layer, and assign pseudopairs to the terminals having their pairs on a different $G$-layer, just as we did in the base case of the previous proof. The pseudopairs can be chosen, such that
\begin{itemize}
\item[i)] their $H$-layers contain no terminal,
\item[ii)] pseduopairs of a pair of terminals are located on the same $H$-layer, and
\item[iii)] every $H$-layer contains at most $b$ pairs of pseudopairs.
\end{itemize}
The initial terminals occupy at most $2s$ $H$-layers. We need and additional empty $H$-layer for every one of the $s$ pairs, that is guaranteed by the condition $|V(G)|\geq 4s$. Pairing of the pseudopairs can be carried out within the $H$-layers Again, we are far from an optimal solution, as an $H$-layer is capable of joining up to $b$ pairs of terminals, hence similar theorem with a stronger condition on the graphs size can be proved.  For the sake of convenience and clarity, we stick to the weaker variant and proceed by investigating the general case.

If $G_{x_1},\dots,G_{x_t}$-layers contain more than $a$-types of terminals, observe first that $t\leq b$, else $G\square H$ would consist of at least $(a+1)(b+1)$ terminals, contradicting $s < \frac{(a+1)(b+1)}{2}$ . It means, that in every vertical layer that contains a terminal $u$, we can assign a pseudopair $u'$ and \-- using that $H$ is $b$-path-pairable, and so is every vertical layer in $G\square H$\-- define edge disjoint $uu'$ paths for every $u$. We can distribute the pseudopairs among the initially empty horizontal layers equally, such that none of them contains more than $a$ pseudopairs. Indeed, we have at least $2s$ empty $G$-layers at our disposal and have to redistribute $2s$ terminals in total. Having done this, we can join the pseudopairs as described in the above base case.

Corollary \ref{pp ab} follows trivially from Theorem \ref{pp oom}. We show, that the bound presented in Corollary \ref{pp ab} is also sharp, up to a constant factor. That is, the order magnitude in the inheritance of path-pairability cannot be expanded more than indicated in Theorem \ref{pp oom}, by simply providing an abudance of space in the product graph. To prove our claim, we first make the following observation: if $G_0\subset G$ and $H_0\subset H$ subsets violate the cut-condition, that is, $e(G_0)<|G_0|$ and $e(H_0)<|H_0|$, the product set $G_0\square H_0$ does not necessarily have the same condition. In order to generate violating product sets, stronger assumptions are needed:
\begin{proposition}\label{doubleviolate}
Let $G$ and $H$ be graphs and $G_0\subset G$ and $H_0\subset H$, such that $2\cdot e(G_0)<|G_0|$ and $2\cdot e(H_0)<|H_0|$. Then $e(G_0\square H_0)< |G_0\square H_0|$, that is, $G_0\square H_0$ violates the cut condition.
\end{proposition}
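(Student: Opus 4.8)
The plan is to prove the statement by directly estimating the number of edges in the Cartesian product $G_0 \square H_0$ in terms of the edges and vertex counts of the factors. Let me write $g = |G_0|$, $p = e(G_0)$, $h = |H_0|$, and $q = e(H_0)$, so that the hypotheses read $2p < g$ and $2q < h$. The key structural fact I would use is the standard edge-count formula for Cartesian products: since each edge of $G_0 \square H_0$ is either a vertical edge (coming from an edge of $G_0$ within a fixed copy indexed by a vertex of $H_0$) or a horizontal edge (coming from an edge of $H_0$ within a fixed copy indexed by a vertex of $G_0$), we have
\begin{equation*}
e(G_0 \square H_0) = p \cdot h + q \cdot g.
\end{equation*}
Meanwhile $|G_0 \square H_0| = g \cdot h$. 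Thus the claim $e(G_0\square H_0) < |G_0\square H_0|$ reduces to the purely arithmetic inequality $ph + qg < gh$.

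First I would establish the edge-count formula above, which is immediate from the definition of the Cartesian product quoted earlier in the paper. Then the entire proof collapses to showing $ph + qg < gh$ given $2p < g$ and $2q < h$. The natural approach is to divide through by $gh$ (both positive, assuming the vertex sets are nonempty) and rewrite the target as $\frac{p}{g} + \frac{q}{h} < 1$. The hypotheses give $\frac{p}{g} < \frac{1}{2}$ and $\frac{q}{h} < \frac{1}{2}$, so adding these two strict inequalities yields $\frac{p}{g} + \frac{q}{h} < 1$ at once, which is exactly what we need. Multiplying back by $gh$ recovers $ph + qg < gh$, completing the argument.

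I do not expect a genuine obstacle here, since the statement is essentially a clean arithmetic consequence of the product edge-count formula; the only points requiring a word of care are the edge cases. If either factor is empty (so $g = 0$ or $h = 0$) the product is empty and the inequality $0 < 0$ fails, so I would implicitly assume $G_0$ and $H_0$ are nonempty, as is natural for sets being tested against the cut-condition. I would also note that the hypotheses $2p < g$ and $2q < h$ force $g \geq 1$ and $h \geq 1$ to be meaningful (in fact they force the factors to be sparse, e.g.\ not every vertex can carry an incident edge on average), so the division step is legitimate. The proof is therefore short: state the edge formula, reduce to $\frac{p}{g} + \frac{q}{h} < 1$, and add the two halved hypotheses.
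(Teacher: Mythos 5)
Your proposal is correct and is essentially identical to the paper's proof: both use the edge-count formula $e(G_0\square H_0)=|G_0|\cdot e(H_0)+|H_0|\cdot e(G_0)$ and bound each summand by $\frac{|G_0|\cdot|H_0|}{2}$ using the two hypotheses. Your extra remarks on nonemptiness are a harmless bit of added care; no substantive difference from the paper's argument.
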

\begin{proof}
Clearly $|G_0\square H_0|= |G_0|\cdot |H_0|$, while $e(G_0\square H_0)=|G_0|\cdot e(H_0)+|H_0|\cdot e(G_0)<\frac{|G_0|\cdot |H_0|}{2}+\frac{|G_0|\cdot |H_0|}{2}=|G_0|\cdot |H_0|$.
\end{proof}

We construct our example by a graph operation called "blowing-up".
 Let $n=k\cdot m$, and define $G(k,m)$ as an equally blown up graph of the path $P_{k}$ of size $n$, that is, $V(G)=\{x_{i,j}: 0\leq i\leq (k-1), 0\leq j\leq (m-1)\}$, where $x_{i,j}$ and $x_{i',j'}$ are connected, if either $i=i'$, or $i-i'=\pm 1$ (modulo $2m$). In other words, we take a path on $k$ vertices, replace every vertex by a complete graph $K_m$, and every edge of the initial path by the edge set of a complete bipartite graph $K_{m,m}$ between the two cliques. We  use the notation $S_i=\{x_{i,j}\in V(G): 0\leq j\leq (k-1)\}$ and refer to the set as the $i$th class of $G$.
\begin{proposition}
$G(k,m)$ is $m^2$-path-pairable, if $k\geq 2m$.
\end{proposition}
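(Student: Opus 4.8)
The plan is to route the $m^2$ pairs by \emph{monotone} paths through the chain of classes $S_0,\dots,S_{k-1}$, regarding the complete bipartite graph $K_{m,m}$ between two consecutive classes as the sole bottleneck. The first step is to record the capacity/demand match. The edge cut separating $S_0\cup\cdots\cup S_{j-1}$ from the rest of $G(k,m)$ consists of exactly the $m^2$ edges joining $S_{j-1}$ and $S_j$, and since the $2m^2$ terminals split into at most $m^2$ on either side of this cut, at most $m^2$ of the $m^2$ pairs can straddle it. Thus every such ``gap'' is crossed by at most as many paths as it has edges. The hypothesis $k\geq 2m$ is precisely what makes room for $2m^2$ distinct terminals in the first place, so it is needed only for feasibility.

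Next I would route each pair $(x,y)$ with $x\in S_a$, $y\in S_b$ (say $a\leq b$) by choosing one vertex $w_i$ in every class $a\leq i\leq b$, with $w_a=x$ and $w_b=y$, and using the bipartite edge $w_iw_{i+1}$ to pass from class $i$ to class $i+1$; same-class pairs ($a=b$) are joined inside the clique $S_a\cong K_m$. Under this scheme, edge-disjointness across the gap between $S_i$ and $S_{i+1}$ reduces to the requirement that the transition pairs $(w_i,w_{i+1})$ be pairwise distinct, that is, to realizing a prescribed bipartite degree sequence as a simple subgraph of $K_{m,m}$. The plan is therefore to select the $w_i$ so that at every class each vertex carries at most $m$ crossing paths entering from the left and at most $m$ leaving to the right. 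Since the number crossing any gap is at most $m^2$, a balanced distribution over the $m$ vertices of a class meets the per-vertex bound $m$; the resulting near-regular degree sequences are realizable inside $K_{m,m}$, and the individual paths can then be threaded consistently from left to right by a Hall-type argument.

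The hard part is that the incoming and outgoing loads at a single vertex are \emph{coupled} by the pass-through paths, and the number of pairs simultaneously present at one class can exceed $m^2$ by up to about $m/2$ when terminals are concentrated there, so the purely straight-through balancing is infeasible by a small margin. I would resolve this using the clique $S_i\cong K_m$: a pass-through path may enter $S_i$ at one vertex and leave at another, spending a single clique edge to \emph{decouple} its contribution to the incoming and outgoing loads. Since the deficiency is only $O(m)$ per class while each clique supplies $\binom{m}{2}$ edges, there is ample room to carry out these rerouting moves, together with the routing of same-class pairs and of terminals to their gap-entry vertices, edge-disjointly within each class by the density of $K_m$. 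Verifying that these local clique demands can always be met edge-disjointly, simultaneously with the global load balancing across all gaps, is the most delicate point and the step I expect to require the most care.
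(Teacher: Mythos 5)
Your proposal is correct and follows essentially the same route as the paper: a monotone left-to-right sweep in which each crossing pair occupies one vertex per intermediate class, with the per-vertex load kept at most $m$ across each complete bipartite gap by a Hall-type balancing argument. The only divergence is your clique-edge decoupling step, which the paper avoids by noting that once $m^2+a$ terminals have been swept past, at least $a$ pairs are already completed, so the unmatched load at any class never exceeds $m^2$ and straight-through balancing suffices.
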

\begin{proof}
 Given a distribution of $m^2$ pairs of vertices, we can carry out pairing by starting at one end of the path, greedily joining terminals to vertices of the consecutive class, and finishing the joining of terminals within the classes. For a terminal $u$, we will assign several $u',u'',\dots$ pseudopairs in the consecutive classes until we finally pair one with the appropriate $v$ pair. We start by pairing terminals that lie in the same class by direct edges of the cliques. From now on we may assume that, for every pair $(u,v)$, one of the terminals is closer to the left end of the path, hence it will be encountered earlier in our left-to-right sweeping algorithm than its pair. Being at class $S_{i}$, the consecutive class $S_{i+1}$ contains at most $m$ terminals. If some of them have appropriate pseudopair in $S_i$, they can be joined by direct edges (here we are using that path-pairability prohibits repeated terminal assignment of a vertex). Then, the remaining terminals of $S_i$ can be assigned a new pseudopair in $S_{i+1}$, maintaining the condition that a vertex $x\in C_{i+1}$ hosts at most $m$ terminals and pseudopairs that have not been paired. Having visited at most $t^2$ terminals, this condition can be easily maintained using Hall's Matching Theorem. Having reached $t^2+a$ terminals, we must have encountered at least a pairs, that is, the number of still unmatched terminals is at most $(t^2-a)$, thus our above reasoning works just as well as before.
\end{proof}

 Now let $G=G(a,k)$ and $H=G(b,k)$, such that $a\geq b\geq 2$ and $k\geq (4a^2+1)$. Moreover, let $G_0\subset G$ and $H_0\subset H$ be formed by $(2a^2+1)$ and $(2b^2+1)$ consecutive classes, starting at the left end of the blown-up paths. The sets $G_0$ and $H_0$ satisfy the conditions of Proposition \ref{doubleviolate}, thus $G\square H$ is not $(2a^2+1)\cdot(2b^2+1)$-path-pairable, regardless of the initial sizes of $G$ and $H$. That justifies our claim.

\section*{Proof of Theorem \ref{m}}

Let us denote the two classes of the bipartite graph $K_{m,m}$ by $A_1$ and $A_2$. We introduce further notation for certain sets of the vertices in the product graph $G=K_{m,m}\square K_{m,m}$ as follows:
$A_{11}=A_1\square A_1$, $A_{12}=A_1\square A_2$, $A_{21}=A_2\square A_1$, and $A_{22}=A_2\square A_2$. We will refer to these sets as {\it classes} of $G$. We set a cyclic order of the four classes clockwise. References to {\it next class} and {\it previous class} are translated in accordance with that given cyclic order. We label the $m^2$ elements of each class by $(u,v)$ pairs, where $u=1,\dots,m$ and $u=1,\dots,m$. We introduce the expression of {\it shipping} a terminal or pseudopair $u$ to a vertex or pseudopair $v$, by which we mean that we establish an $uv$ path $P_{uv}$ between the two vertices, such that $P_{uv}$ shares no edge with any other path. We will join our terminals by shipping them several times, that is, taking the union of several paths defined between appropriate sequences of pseudopairs. Having read the proof of Theorem \ref{pp a+b} and Theorem \ref{pp oom}, this method is likely to look familiar for the reader. A vertex is said to {\it host} a terminal, if the terminal is shipped to the vertex at some point during our pairing procedure.

Given a pairing of the vertices, we carry out the joining of the terminals in three phases named: {\it swarming}, {\it line-up} and {\it final match}. For a pair of terminals of $G$, we first ship them to the same class (swarming), then send them forward to the same row/column of the next class (line-up). Finally, we join the to paths by their newly established ends with a single vertex of the next class (final match).

\subsection*{Swarming} In this phase, we ship one terminal of each pair to the class of its partner. If a pair lies with both vertices within a class, they simply skip the swarming phase.
A terminal $(u,v)$, belonging to class $A_{11}$  and heading to $A_{12}$, shall follow the path $(u,v)\rightarrow (u+1,v)$,  where $(u+1,v)$ denotes the appropriate vertex of $A_{12}$ and addition is calculated modulo $m$.
Similarly, we ship $(u,v)$  to $A_{21}$ via the path  $(u,v)\rightarrow (u,v+1)$. Should $(u,v)$ be shipped to $A_{22}$, we allocate it the path $(u,v)\rightarrow (u+1,v)\rightarrow (u+1,v+2)$ where $(u+1,v)$ belongs to $A_{12}$ and $(u+1,v+2)$ belongs to $A_{22}$.
Terminals belonging to other classes will be shipped by the same rules, increasing the appropriate coordinate by 1 at the first step, and increasing the other one by 2 in the second step, if applicable. Getting shipped via paths of length two is always carried out clockwise.

One can easily verify, that the above arrangment of paths assures that, if $m\geq 5$, no edge is being utilized twice during the swarming phase. We now choose the terminal to be shipped for each pair, such that at the end of the swarming phase, every class hosts exactly $\frac{m^2}{2}$ pairs. Starting with an arbitrary selection, we can assume without loss of generality, that $A_{11}$ hosts the most pairs, and that at least one terminal $x\in A_{11}$ received its pair $y$ from a class hosting less than $\frac{m^2}{2}$ pairs. Shipping $x$ to the class of $y$ instead balances the distribution of the pairs. Repetition of the previous step leads to an equal distribution. 

We define $G'$ with $V(G')=V(G)$, and a new edge set $E(G')$ by deleting those edges from $E(G)$ we used in the swarming phase. Observe, that by the given shipping method, every vertex of $G$ hosts at most 5 terminals and uses at most 8 of its edges, that is, the minimal degree of $G'$ is at least $(m-8)$.  We continue the linking in $G'$.  
\subsection*{Line-up}  We ship each pair of terminals to the next class, such that terminals
 shipped by a horizontal edge shall share the same column of the new class, while vertically shipped terminals will arrive in the same row. For every pair, there are at least $(m-16)$ available columns/rows in the next class. Our intention is to pair up the pairs with the rows/columns, such that every one of them will contain $\frac{m}{2}$ pairs. We recall a straigthforward corollary of Hall's Matching Theorem.
\begin{lemma}\label{Hall}
A bipartite graph $G=(A,B,E)$ with vertex classes of size $n$
whose minimum degree is at least $\frac{n}{2}$ contains a perfect matching.
\end{lemma}
We define the following bipartite graph $G=(A,B,E)$ as follows: represent each pair of terminals hosted in $A_{11}$ by a vertex in $A$, while each column of $A_{12}$ is represented by $\frac{m}{2}$ independent vertices in $B$. Certainly, $|A|=|B|=\frac{m^2}{2}$. We connect two vertices of $A$ and $B$ by an edge, if both terminals of the corresponding pair have horizontal edges to the corresponding column of $A_{12}$. Easy to see, that the graph has minimum degree at least $\frac{m^2}{2}-16m$, hence, by Lemma \ref{Hall}, it contains a perfect matching for $n\geq 64$.

Observe, that if two pairs of terminals sharing a vertex of a class $C$ are distributed to the same vertical layer of the next class $C'$, at least one of the terminals will not be able to get shipped there. We need to guarantee a matching between the pairs and the layers of $C'$ without such a collision. Recall, that each vertex of $C$ hosts at most 5 terminals, hence each pair of terminals has at most 8 additional pairs to collide with. Consider a perfect matching for which the number of above collisions is minimal. Let $(x,y)$ and $(x',y')$ colliding pairs of terminals being sent to layer $L$ of $C'$. We may assume $x$ and $x'$ share the same vertex of $C$. We want to find a pair $(u,v)$ sent to a layer $L'\neq L$ of $E$ such that
\begin{enumerate}
\item[i)] $(x,y)$ can be sent from $C$ to $L'$ (instead of $L$) during the line-up without causing further collision,
\item[ii)] $(u,v)$ can be sent from $C$ to $L$ (instead of $L'$) during the line-up without causing further collision.
\end{enumerate}
The pair $(x,y)$ can be initially sent to $(m-16)$ layers of $C'$, at most 8 of which might contains teminals that initially shared vertex with $(x,y)$ in $C$. In order to avoid further collisions we exclude these layers, leaving us at least $(m-24)$ choices of $L'$. We also want to exclude layers that alreay received terminals from the vertex of $x$ or $y$, yielding at most 8 additional excluded layers, that is, at least $(m-32)$ choices of $L'$ and so $(m-32)\cdot\frac{m}{2}$ choices for $(u,v)$.
We want to choose $(u,v)$ such that it initially did not share vertex in $C$ with any terminal currently hosted in $L$ and that $u$ and $v$ still can be moved (having witdrawn from $L'$) from $C$ to $L$ (the corresponding edges have not been used yet). 
For the first constraint, recall that $L$ contains $\frac{m}{2}$ pairs, every one of which shares vertex with at most 8 additional terminals. There are at most $4m$ additional terminals that initially cannot be sent to $L$, because the appropriate edges had already been used during the first phase.

Now assume that the appropriate edge, that would channel $u$ or $v$ to $L$ has already been used. It can either occur if another terminal was sent from that particular vertex of $C$ to $L$ during the line-up, or if the edges were used during the swarming phase. The first conditions means, that $(u,v)$ collides with the other pair of terminals that was sent to $L$, hence $(u,v)$ is one of the above listed $4m$ pairs. In the remaining case, the missing edge is one of those at most $8\cdot\frac{n}{2}=4m$ edges the complete layer $L$ used up during the swarming. The mentioned edges have at most $4m$ endpoints in $C$ and at most $5\cdot4m=20m$ pairs of terminals correspoding to them. 

 Overall, it means that if $(m-32)\cdot\frac{m}{2}>24m $ (that is, $m>56$) , one can find an appropriate $(u,v)$. Swaping the positions of $(u,v)$ and $(x,y)$, we reduced the number of collisions, contradicting our assumption. 

We repeat the same procedure for the remaining three classes. It can be easily verified that no edge is used more than once.  We define $G''$ by the deletion of the used edges the same way we obtained $G'$. We proceed in $G''$ to the final match. 

\subsection*{Final match} 
For a row/column filled with $\frac{m}{2}$ pairs of terminals, we assign every pair a vertex of the appropriate row/column of the next class, being adjacent to both terminals (see Figure \ref{lineup}). Note that during the first two phases, each vertex has used at most $13$ of its edges. We use Lemma \ref{Hall} to find the appropriate assigment. Let $A$ form the set, in which every pair of terminals of a certain row/column is represented by a vertex. The set $B$ is formed by any $\frac{m}{2}$ vertices of the appropriate column/row of the next class. We connect vertices by edges, if both terminals of the pair are adjacent to the appropriate vertex in the next class. Our bipartite graph has two classes of size $\frac{m}{2}$ and minimum degree $\frac{m}{2}-26$. If $m\geq 104$, the required matching is provided by Lemma \ref{Hall}. That completes the proof.  
\begin{figure}[h]\label{lineup}
\begin{center}
\includegraphics[width=5cm]{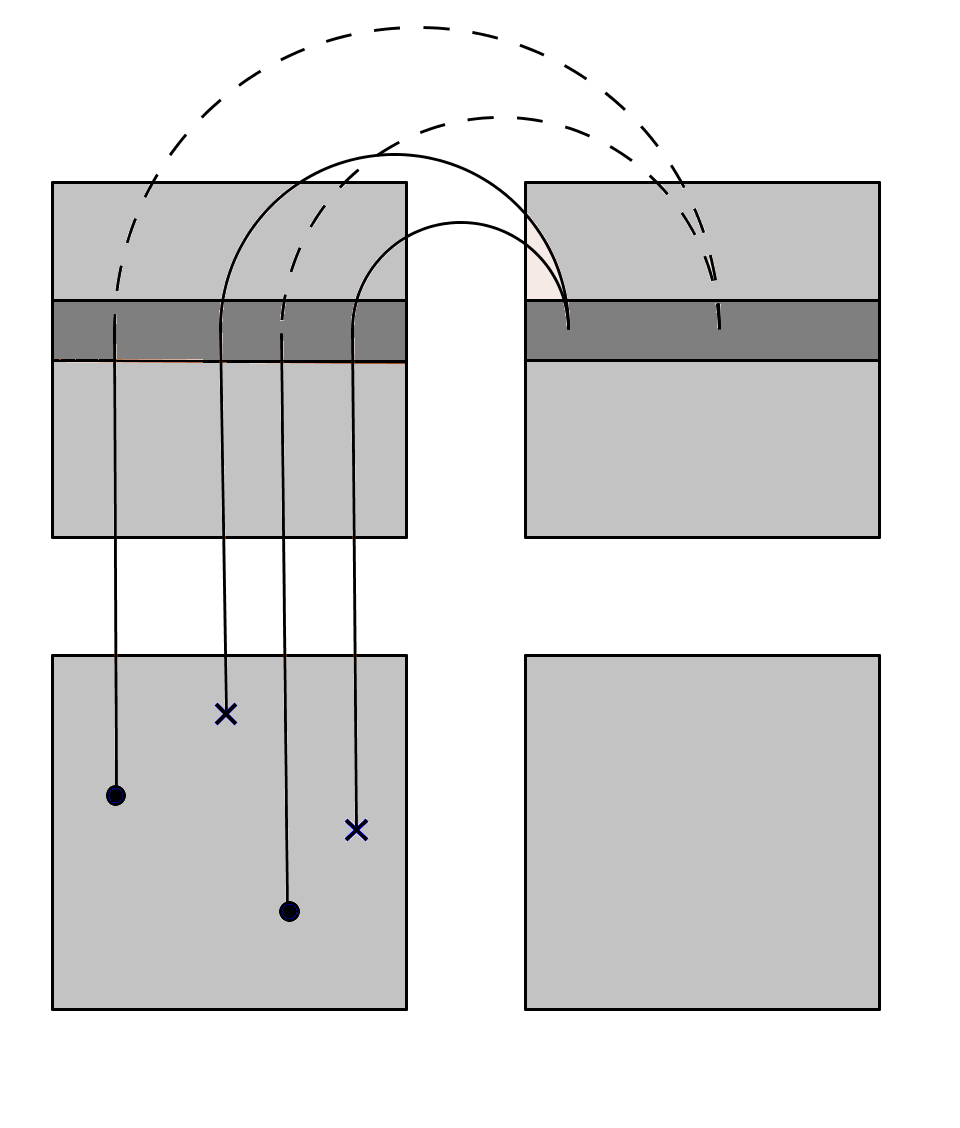} \hspace{1cm}
\caption{Line-up and final match phases.}
\end{center}
\end{figure}
\begin{corollary} 
There exists a path pairable graph $G$ on $n$ vertices with $\Delta(G)=\sqrt{n}$ for infinitely many values of $n$. 
\end{corollary}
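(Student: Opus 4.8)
The plan is to read off the desired family directly from Theorem \ref{m}, so that all of the genuine work has already been carried out and only an elementary verification remains. First I would fix an even integer $m \geq 104$ and set $G = K_{m,m}\square K_{m,m}$. Since $K_{m,m}$ has $2m$ vertices, the Cartesian product has $n = (2m)\cdot(2m) = 4m^2$ vertices, so $n$ is determined by the single parameter $m$.

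Next I would compute the maximum degree. In the Cartesian product the degree of a vertex $(u,v)$ equals $\deg(u) + \deg(v)$, and every vertex of $K_{m,m}$ has degree $m$; hence every vertex of $G$ has degree $m+m = 2m$, so that $G$ is in fact regular with $\Delta(G) = 2m$. Combining this with $n = 4m^2$ yields $\Delta(G) = 2m = \sqrt{4m^2} = \sqrt{n}$, which is exactly the claimed relation and the point that sharpens the previously known $\approx \sqrt{2}\,\sqrt{n}$ bound to the clean value $\sqrt{n}$.

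It then remains to invoke path-pairability and to produce infinitely many admissible sizes. By Theorem \ref{m}, $G = K_{m,m}\square K_{m,m}$ is path-pairable for every even $m \geq 104$. As there are infinitely many even integers $m \geq 104$, the corresponding values $n = 4m^2$ form an infinite set, and for each of them $G$ is a path-pairable graph on $n$ vertices with $\Delta(G) = \sqrt{n}$, establishing the corollary.

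I do not expect any real obstacle in this argument: the entire difficulty is already absorbed into Theorem \ref{m}, and the only point genuinely worth checking is that the additivity of vertex degrees in the Cartesian product produces precisely $\Delta(G) = \sqrt{n}$ rather than a mere constant-factor approximation. The remaining observations -- the vertex count and the infinitude of admissible $m$ -- are immediate.
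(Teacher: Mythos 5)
Your proposal is correct and is exactly the argument the paper intends: the corollary is read off from Theorem \ref{m} by taking $G=K_{m,m}\square K_{m,m}$ for even $m\geq 104$, noting $n=4m^2$ and $\Delta(G)=2m=\sqrt{n}$ via degree additivity in the Cartesian product. Nothing further is needed.
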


\section*{Additional remarks and open questions}

\subsection*{The cut-condition is not sufficient}
We first prove, that the $k$-cut condition does not imply $k$-path-pairability. Consider the disjoint union of the star grah $K_{1,k}$ and the complete graph $K_N$ on $N\geq 2k$ vertices. Join each vertex of degree one to an arbitrary vertex of $K_N$ by an edge, such that different vertices of $K_{1,k}$ are joined to different vertices of $K_N$. The graph $G$ obtained this way is clearly not $k$-path pairable. Indeed, placing a $k+1$ terminals in $K_{1,k}$, such that the pair of the terminal in the center of the star graph is placed in $K_N$, any path starting in the center of the star severs its neighbor, devouring both of its edges. On the other hand, if $S\subset V(G)$ it trivially satisfies the cut-conditions, if it contains any vertex of $K_N$. Hence we may assume $S\subset K_{1,k}$, which case the verification of the cut-condition is straightforward.

Appropriate fine-tuning of the construction provides examples of graphs that are not path-pairable, while they satisfy the cut-condition. Take the disjoint union of $K_{1,k}$ and $K_{k-1}$ and join the two graphs by a matching (avoiding the center of the star) of size $k-1$. Join the remaining vertex of degree one to any vertex of $K_{n-1}$. Just as before, the set of degree-two vertices joined to the center of the star make the graph impossible to channel $k$ edge-disjoint paths. We claim that our graph $G$ satisfies the cut-condition for $k\geq 6$. 
Assume on the contrary that $S\subset V(G)$ of size at most $k$ violates the condition. We proceed by case-by-case analysis.
\begin{description}
\item[Case 1] If $K_{k-1}\subset S$, $S$ must contain an additional vertex, that is, $|S|=k$. Easy to see, that adding neither the center nor any end of the star graphs to the vertex set of $K_{k-1}$ violates the condition.
\item[Case 2] If $|S\cap K_{k-1}|=k-2$, $d(S)\geq k-2$ because of the edges leaving $S$ within $K_{k-1}$. Also,  at least $k-4$ of them have a neighbor in $K_{1,k}$ not belonging to $S$, that is, $d(S)\geq k-2+k-4\geq k$. Since $|S|\leq k$ it cannot violate the cut-condition.
\item[Case 3] If $1\leq |S\cap K_{k-1}|\leq k-3$, $d(S)\geq 2k-6\geq k$ even by considering the edges leaving $S$ within $K_{k-1}$.
\item [Case 4] IF $S\subset K_{1,k}$, $S$ must contain the center of the star else it trivially holds the condition. Observe that each non-central vertex of $K_{1,k}$ has an edge leaving $S$ toward $K_{k-1}$ and so does at least one edge of the star. It completes the proof.
\end{description}

It has been known for some time, that not only linkedness and weak-linkedness force high-connectivity and edge-connectivity of the graph, but that sufficiently large connectivity and edge-connectivity imply linkedness and weak-linkedness, respectively. It would be interesting to see if similar result can be proved about the relation of the cut-conditions and path-pairability.

\subsection*{Path-pairability of hypercubes and grids}

As discussed previously, path-pairable graphs on $n$ vertives have a certain lower bound of approximately $O(\frac{\log n}{\log\log n})$ on the minimal value of the maximum degree $\Delta$. On the other hand, the smallest achieved maximum degree provided by Theorem \ref{m} has order of magnitude $O(\sqrt{n})$, still leaving plenty of room for improvements on both sides. One particularly interesting and promising path-pairable  candidate is the $d$-dimensional hypercube $Q_d$ on $n=2^d$ vertices with $\Delta(Q_d)=d=\log n$. Although it is known that $Q_n$ is not path-pairable for even values of  $d$ (\cite{F}), the question is open for odd dimensional hypercubes if $d\geq 5$ ($Q_1$ and $Q_3$ are both path-pairable). 
\begin{conjecture}[\cite{Cs}]\label{q}
The $(2k+1)$-dimensional hypercube $Q_{2k+1}$ is path-pairable for all $k\in\mathbb{N}$.
\end{conjecture}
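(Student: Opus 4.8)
The plan is to attack the conjecture by strong induction on $k$, exploiting the recursive product structure $Q_{2k+1}=Q_{2k-1}\square Q_2$ together with the routing philosophy developed for Theorem \ref{m}. The base cases $Q_1$ and $Q_3$ are path-pairable, and the inductive hypothesis supplies that $Q_{2k-1}$ is fully path-pairable, i.e.\ $2^{2k-2}$-path-pairable. Writing $Q_{2k+1}$ as four copies $L_{00},L_{01},L_{11},L_{10}$ of $Q_{2k-1}$ arranged in the $4$-cycle $Q_2$, I would separate the horizontal ($Q_2$-direction) edges from the vertical ($Q_{2k-1}$-direction) edges; as in the proofs of Theorems \ref{pp a+b} and \ref{m}, a routing that first shifts terminals horizontally and then completes the pairing vertically automatically keeps the two edge types disjoint, so the two regimes can be analysed independently.

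Concretely, I would adapt the \emph{swarming} step of Theorem \ref{m}: for every cross-layer pair, ship one of its terminals along the cycle (a path of length one, or length two for an antipodal pair of layers, always routed clockwise to avoid clashes, exactly as in that proof) so that both terminals of each pair come to lie in a common layer, while balancing the load to exactly $2^{2k-2}$ pairs per layer. Since $Q_{2k+1}$ is fully paired, after balancing each layer carries $2^{2k-1}$ terminals on its $2^{2k-1}$ vertices, i.e.\ a complete pairing of $Q_{2k-1}$; the role played by the line-up and final-match phases of Theorem \ref{m} is here subsumed by the induction hypothesis, which routes each layer internally using only vertical edges. Concatenating the horizontal swarming paths with these vertical paths then yields the required edge-disjoint system.

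The hard part will be the swarming phase, and this is where the difference from Theorem \ref{m} becomes acute. In $K_{m,m}\square K_{m,m}$ the large degree let us absorb collisions through Hall's theorem (Lemma \ref{Hall}); in the hypercube each horizontal edge leads to a single \emph{forced} neighbour, so shipping a terminal from $L_{00}$ deposits it on a predetermined vertex of the next layer, with no freedom to reallocate. Because the target layer is already full, swarming is really a constrained permutation-routing problem on $Q_2$ with capacity $2^{2k-1}$ on each side of the cycle: for each pair one must choose which terminal moves and in which direction, so that the induced per-side flows respect the capacities \emph{and} the forced landing vertices fill exactly the slots vacated inside each layer, leaving a legal pairing on distinct vertices as input for the vertical induction. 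Establishing that such a balanced, capacity-feasible choice exists for every adversarial pairing is the crux; I expect it to require an isoperimetric input, namely Harper's edge-isoperimetric inequality on the hypercube, to certify that no layer can be forced into an infeasible overload.

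Finally, any correct argument must locate the precise point at which the oddness of $2k+1$ is used, since $Q_{2k}$ is provably not path-pairable and the inheritance theorems of this paper (which, applied to $Q_{2k-1}\square Q_2$, fall short of the required $2^{2k}$ pairs by a constant factor) cannot see this distinction. My expectation is that oddness enters as a parity condition on the rebalancing flow across the $C_4$: in odd dimension the surplus of each layer can be evened out along the cycle with the available matchings, whereas in even dimension a parity obstruction---presumably the same one underlying the known even-dimensional counterexample---forces an unroutable residue. Pinning down this parity invariant, and showing that it vanishes exactly when the last coordinate makes the dimension odd, is the step I would expect to be both essential and the most delicate.
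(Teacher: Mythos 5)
There is nothing in the paper to compare your argument against: the statement you are addressing is stated as an open conjecture (attributed to \cite{Cs}), and the paper explicitly says the question is unresolved for all odd $d\geq 5$. So the only question is whether your proposal closes it, and it does not: what you have written is a program, not a proof, and you yourself flag the decisive step --- feasibility of the swarming/rebalancing phase --- as an expectation rather than an argument. That step is precisely where the approach breaks down as stated. In Theorem \ref{m} the swarming and line-up phases survive because every vertex has $m$ neighbours in the adjacent class, so collisions can be absorbed by Hall's theorem; in $Q_{2k+1}=Q_{2k-1}\square Q_2$ each vertex of a layer has exactly \emph{one} neighbour in each adjacent layer, and since the hypercube is fully paired there is zero slack: after swarming, every layer must carry a legal pairing occupying \emph{every} vertex exactly once, with each relocated terminal forced onto a predetermined landing vertex. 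This is an exact-cover condition, not a matching-with-surplus condition, and neither Hall's theorem nor the cited edge-isoperimetric inequality is shown (or likely) to certify it for an adversarial pairing. Moreover, even if it could be arranged, the induction hypothesis only gives edge-disjoint paths inside each layer for a set of \emph{distinct} terminals; you have not shown that the forced landing vertices, the stationary terminals, and the vacated vertices can be made to coincide into such a set.

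Two further points. First, your appeal to the paper's inheritance theorems is only an analogy: applied to $Q_{2k-1}\square Q_2$ they yield path-pairability numbers of order $a+b$ or $\tfrac{(a+1)(b+1)}{2}$, which are exponentially far from the $2^{2k}$ pairs required, so the paper's machinery cannot be ``upgraded'' to full path-pairability without a genuinely new idea. Second, your discussion of where oddness enters is speculation; no parity invariant is defined, and without one the argument cannot distinguish $Q_{2k+1}$ from $Q_{2k}$, for which the conclusion is known to be false. Identifying that invariant is not a delicate final step of your proof --- it is the missing content of the conjecture itself.
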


The question regarding the path-pairability number of larger $n$ dimensional affine and projective grids, that is, the Cartesian product of $d$ paths or $d$ cycles has not been answered either. It can be derived rather easily from Theorem \ref{pp oom}, that sufficiently large $d$-dimensional projective grids are $O(2^d)$-path-pairable. Similar result concerning affine grids can be obtained. We leave the proof of both statements to the reader. On the other hand, it can be proved that a $d$ dimensional projective grid is at most $O((2d)^{2d})$-path-pairable, regardless of its size, if the grid is large enough in every dimension.
\begin{proposition}
Let $G=C_{m_1}\square C_{m_2}\square\dots C_{m_d}$, where $C_{m_i}$ denotes a cycle of length $m_i$ and  $m_i\geq (2d+1)$, $i=1,2,\dots, d$. Then $pp(G)\leq (2d)^{d-1}\cdot(2d+1) $ 
\end{proposition}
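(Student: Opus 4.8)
The plan is to exhibit a single vertex set $S$ that violates the cut-condition at the required threshold; by the implication ``$k$-path-pairable $\Rightarrow$ $k$-cut-condition'' recalled in the introduction, this immediately caps $pp(G)$. The natural witness in a product of cycles is a \emph{box}, that is, a product set $S = B_1\square\cdots\square B_d$ where each $B_i$ is an arc of $\ell_i$ consecutive vertices of $C_{m_i}$. For such a box one has $|S| = \prod_i \ell_i$, and, since every edge of a Cartesian product changes exactly one coordinate, the edge-boundary splits by direction as
\[
d(S) \;=\; \sum_{i=1}^{d} b_i \cdot \prod_{j\neq i}\ell_j ,
\]
where $b_i$ is the number of edges of $C_{m_i}$ joining $B_i$ to its complement. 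Here the \emph{projective} structure enters: a proper arc of a cycle has exactly $b_i=2$ boundary edges (an arc of a path could have only one), while an arc equal to the whole cycle has $b_i=0$. Thus for proper arcs $d(S) = 2|S|\sum_i 1/\ell_i$, and the box violates the cut-condition precisely when $\sum_i 1/\ell_i < \frac12$.

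I would then tune the arc lengths so that $|S|$ hits the target while the isoperimetric sum stays just below $\frac12$. Take $\ell_i = 2d$ for $i=1,\dots,d-1$ (legitimate since $m_i\geq 2d+1>2d$) and $\ell_d = 2d+1$ (legitimate since $m_d\geq 2d+1$), which gives $|S| = (2d)^{d-1}(2d+1)$, exactly the claimed bound. There are two cases for the boundary. If $m_d>2d+1$ every arc is proper, so
\[
\sum_{i=1}^d \frac{1}{\ell_i} = \frac{d-1}{2d} + \frac{1}{2d+1} = \frac{1}{2} - \frac{1}{2d(2d+1)} < \frac{1}{2},
\]
whence $d(S) = 2|S|\sum_i 1/\ell_i < |S|$. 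If $m_d = 2d+1$ then $B_d$ is the whole cycle, contributing no boundary, and $d(S) = 2|S|\cdot\frac{d-1}{2d} = |S|\cdot\frac{d-1}{d} < |S|$. In both cases $d(S) < |S|$.

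Finally I would close via the cut-condition. Set $k_0 = (2d)^{d-1}(2d+1) = |S|$ and suppose, for contradiction, that $G$ is $k_0$-path-pairable. Then $|V(G)|\geq 2k_0$, so $|V(G)\setminus S|\geq k_0 = |S|$ and one may place a terminal on every vertex of $S$ with its partner in $V(G)\setminus S$. These $|S|\leq k_0$ pairs force $|S|$ edge-disjoint paths to cross from $S$ to its complement, each consuming a distinct boundary edge — impossible, since only $d(S) < |S|$ such edges exist. This violates the $k_0$-cut-condition, so $G$ is not $k_0$-path-pairable, giving $pp(G) \leq k_0 = (2d)^{d-1}(2d+1)$.

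The one genuinely load-bearing step is the boundary estimate. The fully symmetric choice $\ell_i = 2d$ for all $i$ gives $\sum_i 1/\ell_i = \frac12$ exactly, i.e. $d(S)=|S|$, which is useless; the whole point is that promoting a single coordinate to $2d+1$ — or, in the extremal case $m_d = 2d+1$, letting that arc swallow the entire cycle so its boundary vanishes — tips the ratio strictly below $1$ while producing precisely the volume $(2d)^{d-1}(2d+1)$. I expect the care to lie in counting two boundary edges per proper arc correctly and in handling the borderline case $m_d = 2d+1$, rather than in any deeper difficulty; note that in that borderline regime the needed room $|V(G)|\geq 2k_0$ is supplied automatically by the assumed path-pairability, so no separate small-grid analysis is required.
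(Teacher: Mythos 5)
Your proof is correct and follows essentially the same route as the paper: both exhibit the box $C_{2d}\square\cdots\square C_{2d}\square C_{2d+1}$ of size $(2d)^{d-1}(2d+1)$ as a violator of the cut-condition, and your boundary count $d(S)=2|S|\sum_i 1/\ell_i$ matches the paper's $2\bigl((d-1)(2d)^{d-2}(2d+1)+(2d)^{d-1}\bigr)$ exactly. Your treatment is in fact slightly more careful than the paper's, which silently assumes all arcs are proper and dismisses the size condition $|V(G)|\geq 2k_0$ as trivial rather than deriving it from the definition of path-pairability as you do.
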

\begin{proof}
Way may assume $|G| \geq 2\cdot(2d)^{d-1}\cdot(2d+1)$, else the statement is trivial. Consider now the $d$-dimensional subgrid $G_0=C_{2d}\square\dots\square C_{2d}\square C_{2d+1}$. Easy to see that
$G_0$ violates the cut-condition as $V(G_0)=(2d)^{d-1}\cdot(2d+1)>2\cdot( (d-1)(2d)^{d-2}(2d+1)+ (2d)^{d-1})=d(G_0)$. It shows that $G$ is less than $(2d)^{d-1}\cdot(2d+1)$-path-pairable.
\end{proof}

The presented bounds are still far apart and leave plenty of room for improvements.

\begin{question}
Determine the values of $pp(P_{m_1}\square P_{m_2}\square\dots P_{m_d})$ and $pp(C_{m_1}\square C_{m_2}\square\dots C_{m_d})$ ($P_{m_i}$ denotes a path of length $m_i$).
\end{question}

\subsection*{Possible extension of Theorem \ref{m}}
This paper only deals with a special type of products of complete bipartite graphs. With a detailed and cumbersome analysis of our presented techniques, one can prove that the product graph $K_{a,b}\square K_{c,d}$ is path-pairable if $\frac{max(a,b,c,d)}{min(a,b,c,d)}<2$ and $a,b,c,d$ are large enough (in terms of the previous ratio).
 We close up with highlighting, that path-pairability of $K_{a,b}\square K_{c,d}$ in the general case is still subject to further investigation, as well as proposing another intriguing open question motivated by \cite{grid}. 
 \begin{question}
For which values of $a,b,c,d\in\mathbb{Z}^+$ ($a\leq b$, $c\leq d$) is the product graph $K_{a,b}\square K_{c,d}$ path-pairable?
\end{question}
\begin{question}
What are the necessary and sufficient conditions for a graph $G$, such that $G\square K_n$ will be path-pairable if $n$ is large enough?
\end{question}

\section*{Acknowledgement}

The author wishes to thank Professor Ervin Gy\H ori  and Professor Ralph Faudree for their helpful suggestions, interest, and guidance.

Research was supported by the Balassi Institute, the Fulbright Commission, and the Rosztoczy Foundation.

\end{document}